\newcounter{mgncount}
\newtheorem*{thm-}{Theorem}
\declaretheorem[name=Theorem,numberwithin=section]{thm}
\declaretheorem[name=Remark,style=remark,sibling=thm]{rem}
\declaretheorem[name=Lemma,sibling=thm]{lemma}
\declaretheorem[name=Proposition,sibling=thm]{prop}
\declaretheorem[name=Corollary,sibling=thm]{cor}
\declaretheorem[name=Question,sibling=thm]{qu}
\declaretheorem[name=Theorem,numbered=no]{theorem}
\declaretheorem[name=Question,numbered=no]{question}
\numberwithin{equation}{section}
\newcommand{\ti}{\tilde}
\newcommand{\bbR}{\mathbb{R}}
\newcommand{\bbS}{\mathbb{S}}
\newcommand{\al}{\alpha}
\newcommand{\be}{\omega}
\newcommand{\ga}{\gamma}
\newcommand{\de}{\delta}
\newcommand{\ep}{\epsilon}
\newcommand{\la}{\lambda}
\newcommand{\si}{\sigma}
\newcommand{\De}{\Delta}
\newcommand{\cA}{\mathcal{A}}
\newcommand{\cB}{\mathcal{B}}
\newcommand{\cI}{\mathcal{I}}
\newcommand{\cL}{\mathcal{L}}
\newcommand{\cM}{\mathcal{M}}
\newcommand{\cO}{\mathcal{O}}
\renewcommand{\(}{\left(}
\renewcommand{\)}{\right)}
\newcommand{\pf}[1]{\begin{proof}#1 \end{proof}}
\newcommand{\eq}[1]{\begin{equation}\begin{alignedat}{2} #1 \end{alignedat}\end{equation}}
\newcommand{\ra}{\rightarrow}
\newcommand{\mt}{\mapsto}
\newcommand{\hp}{\hphantom}
\begin{document}
	\title[Prescribed $L_p$ curvature problem]
	{Prescribed $L_p$ curvature problem}
	\author[Y. Hu, M. N. Ivaki]{Yingxiang Hu, Mohammad N. Ivaki}
	
\maketitle
	
\begin{abstract}
In this paper, we establish the existence of smooth, origin-symmetric, strictly convex solutions to the prescribed even $L_p$ curvature problem.
\end{abstract}

\section{Introduction}
The prescribed curvature problem, asks the following question:

\begin{question}
Given a positive, smooth function $f:\bbS^n\to \bbR$, is there a closed, smooth, strictly convex hypersurface whose $k$-th elementary symmetric function of the principal curvatures, $S_k$, as a function of the
unit normal vector is $f$?
\end{question}

The case $k=n$ of this Question is the Minkowski problem, which is completely solved; see, e.g., \cite{Lew38,Nir53,CY76,Pog78,Caf90}. For the case $k<n$, in \cite{GG02}, Bo Guan and Pengfei Guan proved the following result: 

\begin{theorem}Let $\ell\geq 1$ and $f: \bbS^n\to (0,\infty)$  be a $C^{\ell,1}$-smooth, even function (i.e. $f(x)=f(-x)$ for all $x\in \bbS^n$). Then there exists an origin-symmetric, $C^{\ell+2,\alpha}$-smooth (for all $0<\alpha<1$), strictly convex hypersurface whose $S_k$ is equal to $f$.
\end{theorem}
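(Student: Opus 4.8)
\emph{Step 1: reformulation.}
Represent the sought hypersurface as the boundary of a convex body $K\subset\bbR^{n+1}$ with support function $h\in C^{\ell+2,\alpha}(\bbS^n)$; then $h$ is even iff $K$ is origin-symmetric, $K$ has smooth strictly convex boundary iff $A[h]:=\nabla^2h+h\,g>0$ (with $g$ the round metric and $\nabla$ its connection), and the principal radii of curvature at the boundary point with outer unit normal $x$ are the eigenvalues of $A[h](x)$. Since $S_k$ of the principal curvatures equals $S_{n-k}(A[h])/S_n(A[h])$, the problem becomes: find an even $h$ with $A[h]>0$ solving
\[
G(A[h]):=\left(\frac{S_n}{S_{n-k}}\right)^{1/k}(A[h])=f^{-1/k}\qquad\text{on }\bbS^n.
\]
Two structural facts will be used: $G$ is homogeneous of degree one and elliptic (its matrix of first derivatives $G^{ij}$ is positive definite on the positive cone) and, decisively, $G$ is \emph{concave} there. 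Note also that the equation is invariant under $h\mapsto h+\langle v,\,\cdot\,\rangle$ for $v\in\bbR^{n+1}$ — because $\nabla^2\langle v,x\rangle+\langle v,x\rangle g\equiv 0$ — reflecting the freedom to translate $K$; restricting to even $h$ removes exactly this degeneracy.

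\emph{Step 2: continuity method.}
Fix a positive constant $f_0$, mollify $f$ to a smooth even positive function if necessary, and set $f_t:=f_0^{\,1-t}f^{\,t}$ for $t\in[0,1]$, which is smooth, positive and even. Let $\mathcal I\subset[0,1]$ be the set of $t$ for which the displayed equation with $f$ replaced by $f_t$ has an even solution $h_t$ with $A[h_t]>0$. For $t=0$ the round sphere of radius $(\binom nk/f_0)^{1/k}$ works, so $0\in\mathcal I$. For openness, linearise: $L_h\psi=\sum_{ij}G^{ij}(A[h])\,(\nabla_i\nabla_j\psi+\psi\,g_{ij})$ is elliptic and Fredholm of index zero, its kernel contains the odd linear functions $x\mapsto\langle v,x\rangle$ (the infinitesimal translations just mentioned) and, one verifies, nothing even; hence $L_h$ restricts to an isomorphism between the even subspaces $C^{\ell+2,\alpha}_{\mathrm{even}}\to C^{\ell,\alpha}_{\mathrm{even}}$ and the implicit function theorem gives openness. (Alternatively one invokes degree theory for fully nonlinear elliptic equations on $\bbS^n$: the degree, computed at the round sphere where $L=\Delta+n$ has only the odd first eigenfunctions in its kernel, is nonzero on even functions, and homotopy invariance — legitimate thanks to the estimates below — yields solvability for all $t$.) Closedness of $\mathcal I$ is precisely the content of the a priori estimates, uniform in $t$.

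\emph{Step 3: a priori estimates — the crux.}
One needs three uniform bounds for even solutions $h_t$. (i) A $C^0$ bound $0<c_1\le h_t\le c_2$: since $h_t$ is even, $K_t$ is origin-symmetric, which both fixes the translational gauge and, together with $\inf f_0,\inf f>0$, $\sup f<\infty$, and the integral-geometric identities relating $\int_{\bbS^n}S_j(A[h])\,d\sigma$ to the quermassintegrals of $K$, prevents $K_t$ from escaping to infinity or collapsing to lower dimension; this is the one place the evenness hypothesis is essential. (ii) A $C^1$ bound is then automatic, since for a support function $|\nabla h_t|\le|\nabla h_t+h_t x|+|h_t|\le 2\sup h_t\le 2c_2$. (iii) The $C^2$ bound — the technical heart — i.e.\ an upper bound for the largest eigenvalue of $A[h_t]$: apply the maximum principle on $\bbS^n$ to (a regularisation of) $\lambda_{\max}(A[h_t])$, differentiate the equation twice, and use the concavity of $G$ to absorb the second-order error terms, the $C^0$–$C^1$ bounds and the (constant, harmless) curvature of $\bbS^n$ for the rest; the terms carrying $\nabla^2 f_t$ are controlled once $f\in C^{1,1}$, which is where the hypothesis $\ell\ge1$ enters (one runs the argument for the smooth $f_t$ and passes to the limit). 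This yields all principal radii $\le C$, hence all principal curvatures $\ge 1/C$; feeding this back into $S_k(\kappa)=f_t\le\sup f_t$ and using $\kappa_i\ge1/C$ bounds the principal curvatures from above as well, so $C^{-1}g\le A[h_t]\le C\,g$ — in particular strict convexity is not lost in the limit. I expect this $C^2$ estimate, together with the $C^0$ estimate's reliance on symmetry, to be the main obstacle; everything else is structural or standard.

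\emph{Step 4: regularity and conclusion.}
With $h_t$ bounded in $C^2$ and $A[h_t]$ uniformly positive definite, the equation is uniformly elliptic and concave in the Hessian, so the Evans–Krylov theorem gives a uniform $C^{2,\alpha}$ bound; differentiating the equation and applying Schauder estimates repeatedly upgrades this to a uniform $C^{\ell+2,\alpha}$ bound (using that $f_t$, and the mollified $f$, are bounded in $C^{\ell,1}$). Hence $\mathcal I$ is closed, so $\mathcal I=[0,1]$ and $1\in\mathcal I$; if $f$ was mollified, extract a $C^{\ell+2,\alpha'}$-convergent subsequence of the solutions as the mollification parameter tends to $0$ and bootstrap once more. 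The resulting even support function $h$ with $A[h]>0$ is that of an origin-symmetric, $C^{\ell+2,\alpha}$-smooth, strictly convex hypersurface whose $S_k$ equals $f$, as required.
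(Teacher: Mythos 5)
The gap is in Step~3(i). You assert that the $C^0$ bound for an even solution follows from ``integral-geometric identities relating $\int_{\bbS^n} S_j(A[h])\,d\sigma$ to the quermassintegrals of $K$'' together with $\inf f>0$, $\sup f<\infty$ and origin-symmetry. Unpacking this, the equation $\sigma_{n-k}(A[h]) = f\,\sigma_n(A[h])$ and the Aleksandrov--Fenchel inequality $W_k(K)\geq c_{n,k}\,V(K)^{(n+1-k)/(n+1)}$ indeed give a \emph{lower} bound on $V(K)$ (hence, via rolling, on the inradius), but they do \emph{not} give an upper bound on the diameter when $k<n$: a long thin origin-symmetric body of bounded inradius satisfies $W_k \gtrsim (\inf f)\,V$ for free, so nothing in your identities prevents the body from escaping to infinity in one direction. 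This is precisely why the case $k<n$ is genuinely harder than the Minkowski problem, and it is the reason the paper stresses that Guan--Guan argue in the \emph{opposite} order: they first establish two-sided bounds on the principal curvatures \emph{without} any prior $C^0$ control, exploiting the special divergence structure of $\sigma_k$ (not merely the concavity of $(\sigma_n/\sigma_{n-k})^{1/k}$ that your Step~3(iii) uses), and only then obtain the $C^0$ estimate from those curvature bounds via an ODE argument or Schneider's theorem \cite{Sch88}, which says that a closed convex hypersurface whose principal curvatures lie in $[1/C,C]$ has bounded diameter.

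As a consequence, your Step~3 is circular: the maximum-principle argument in (iii) that you sketch for $\lambda_{\max}(A[h_t])$ explicitly feeds in the $C^0$ and $C^1$ bounds from (i)--(ii), but those were never actually established. To repair the argument you would need to replace (i)--(iii) by the Guan--Guan curvature estimate, which bounds $\lambda_{\max}$ uniformly \emph{before} any diameter control is available; concavity of $G$ alone is not enough for this step because the zeroth-order term $h\,G^{ij}g_{ij}$ in the linearised operator carries the unbounded support function. Everything else --- the reformulation in terms of $A[h]$, the continuity method with the translational gauge removed by evenness, Evans--Krylov plus Schauder --- is in line with the standard scheme and is consistent with the paper's description of \cite{GG02}, but the a~priori estimate is the crux, and as written it does not close.
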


Note that while the restriction to even (or group invariant) $f$ is unnecessary, in view of \cite[Thm. 1.4]{GG02}, dropping such a condition may lead to the non-existence of strictly convex solutions. A variation of Guan-Guan's result by dropping the evenness assumption at the cost of introducing an exponential weight factor was established by Sheng, Trudinger and Wang in \cite{STW04}. See also \cite{Che59,TW83,CNS85,Tru90,Ger96,Ger97,CNS98,GLM06,GLL12,LS13,CLW18,CX22}.

Over the past hundred years, the Minkowski problem has massively evolved, and fundamental generalizations have been put forward by many researchers. One of the most notable generalizations is known as the $L_p$-Minkowski problem, which was introduced by Lutwak in his fundamental development of Brunn-Minkowski-Firey theory; see, for example, \cite{Lut93,LO95,Lut96,LYZ00,LYZ02,LYZ04,CW06} and \cite{LR10,BLYZ12,BLYZ13,HLW16,BCD17,BIS19,BBCY19,Li19,CHLL20,HXZ21,GLW22,KM22, BG23,Mil23,IM23}. Motivated by the vast and ever-growing literature on the $L_p$-Minkowski problem, we would like to ask the following question:

\begin{question}
Given a smooth function $f:\bbS^n\to (0,\infty)$, is there a closed, smooth, strictly convex hypersurface with the support function $s$ such that for some constant $c$,
\eq{\label{Lp-CE-problem}
fs^{p-1}S_k=c\quad ?
}
\end{question}

The case $k=n$ is the well-known  $L_p$-Minkowski problem. In this paper, we prove the following theorem.
\begin{thm}\label{main thm}
Let $1<p<k+1$, $1\leq k<n$ and $\ell\geq 2$. Let $f\in C^{\ell}(\bbS^n)$ be a positive, even function. Then there exists an origin-symmetric, $C^{\ell+1,\alpha}$-smooth (for all $0<\alpha<1$), strictly convex hypersurface with the support function $s$ such that
$
fs^{p-1}S_k=1.
$	
\end{thm}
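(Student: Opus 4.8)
\medskip
\noindent\textbf{Strategy.}
The plan is to rewrite the equation as a fully nonlinear elliptic equation for the (even) support function on $\Sn$ and to solve it by a degree argument. Write $\ov g$ for the round metric on $\Sn$ and, for positive $s\in C^2(\Sn)$, put $A[s]:=\nabla^2 s+s\,\ov g$. When $A[s]>0$, the map $x\mapsto\nabla s(x)+s(x)x$ parametrizes a $C^2$ strictly convex hypersurface whose principal radii of curvature are the eigenvalues $r_1,\dots,r_n$ of $A[s]$, hence whose principal curvatures are $1/r_i$; thus $S_k=\sigma_k(1/r_1,\dots,1/r_n)=\sigma_{n-k}(A[s])/\sigma_n(A[s])$, and $fS_k=s^{1-p}$ is equivalent to
\[
F\br{A[s]}=f^{1/k}\,s^{(p-1)/k},\qquad F(A):=\br{\tfrac{\sigma_n(A)}{\sigma_{n-k}(A)}}^{1/k}=\sigma_k(A^{-1})^{-1/k}.
\]
On positive definite matrices $F$ is a concave, increasing, uniformly elliptic operator, so this equation is of the type to which the Evans--Krylov and Caffarelli--Nirenberg--Spruck $C^{2,\al}$-theory applies once two-sided bounds on $A[s]$ are in hand. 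Working in the open set $\cA:=\{s\in C^{2,\al}(\Sn):s>0,\ A[s]>0,\ s(-x)=s(x)\}$, I would apply the degree theory for such operators along the homotopy $f_t:=(1-t)+tf$, $t\in[0,1]$, which keeps $f_t$ positive and even with $\min f_t,\max f_t,\|f_t\|_{C^\ell}$ controlled in terms of the corresponding quantities for $f$.

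\medskip
\noindent\textbf{A priori estimates.}
The heart of the matter is to bound the solution set of $F(A[s])=f_t^{1/k}s^{(p-1)/k}$ away from $\partial\cA$ in $C^{2,\al}$, uniformly in $t$. Evaluating the equation at a maximum and a minimum of $s$ (where $A[s]\le s\,\ov g$, resp.\ $A[s]\ge s\,\ov g$, so $\binom{n}{k}(s)^{-k}\le S_k$, resp.\ $S_k\le\binom{n}{k}(s)^{-k}$, there) gives $\max s\ge c$ and $\min s\le C$ with $c,C$ depending only on $n,k,p,\min f,\max f$; solving these inequalities for $\max s,\min s$ uses $k+1-p>0$. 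The reverse bounds --- the genuine $C^0$ estimate $\max s\le C$ and the nondegeneracy bound $\min s\ge c$ --- are the substantive part: the first follows by testing the equation against suitable functions and using the subcriticality $p<k+1$ (the case $p=k+1$ being scale invariant), while the second uses the origin-symmetry of the body, since via John's ellipsoid a small $\min s$ forces the body to be too flat and, through the equation, yields a contradiction --- this is where evenness is essential. Then $|\nabla s|\le C$ is automatic, as $|\nabla s|^2+s^2$ is the squared distance from the origin to the corresponding boundary point. For the $C^2$ estimate one needs $c\,\ov g\le A[s]\le C\,\ov g$: the upper bound on the principal radii comes from the maximum principle applied to the largest eigenvalue of $A[s]$ after differentiating the equation twice, and the lower bound --- equivalently, an upper bound on the principal curvatures, i.e.\ strict convexity of the solution --- is obtained by adapting the auxiliary-function argument of Guan--Guan, the weight $s^{(p-1)/k}$ entering through its $s$-derivative, whose sign is favorable because $p>1$. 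With $c\,\ov g\le A[s]\le C\,\ov g$ the equation is uniformly elliptic and $F$ concave, so Evans--Krylov yields a uniform $C^{2,\al}$ bound; differentiating the equation and iterating Schauder estimates, using $f\in C^\ell$ with $\ell\ge 2$, upgrades it to a uniform $C^{\ell+1,\al}$ bound.

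\medskip
\noindent\textbf{Degree at the isotropic model and conclusion.}
At $t=0$ the equation is $S_k(\kappa[s])=s^{1-p}$, whose unique admissible solution is the round sphere $s\equiv R_0$ with $R_0^{\,k+1-p}=\binom{n}{k}$ (uniqueness at the isotropic model for $p>1$, as in the $L_p$-Minkowski theory). The linearization of $s\mapsto F(A[s])-s^{(p-1)/k}$ at $s\equiv R_0$ is, up to a positive constant multiple,
\[
\cL\phi=\Delta\phi+\tfrac nk(k+1-p)\,\phi,
\]
so $\cL\phi=0$ forces $-\Delta\phi=\tfrac nk(k+1-p)\phi$. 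By hypothesis $1<p<k+1$, hence $\tfrac nk(k+1-p)\in(0,n)$, which lies strictly between the first two eigenvalues $0$ and $n$ of $-\Delta$ on $\Sn$; therefore $\cL:C^{2,\al}\to C^{\al}$ is an isomorphism, and the degree of the operator at $t=0$ on a suitable ball in $\cA$ equals $\pm1$. By the uniform a priori estimates the degree is $t$-independent, hence $\pm1$ at $t=1$, which produces an even $s\in\cA$ solving $fS_k=s^{1-p}$; the Schauder bootstrap then shows $s\in C^{\ell+1,\al}$ for every $\al\in(0,1)$, and $A[s]>0$ is the asserted strict convexity, completing the proof.

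\medskip
\noindent\textbf{Principal obstacle.}
I expect the decisive difficulty to be the $C^2$ estimate when $k<n$ --- specifically the positive lower bound on $A[s]$, i.e.\ strict convexity of the solution: unlike the Minkowski case $k=n$, a bound on $S_k$ of the principal curvatures does not bound the individual curvatures, and one must carry out a delicate Guan--Guan-type maximum-principle argument in which the weight $s^{(p-1)/k}$, the quotient structure of $\sigma_{n-k}/\sigma_n$, and the constraint $1<p<k+1$ all have to cooperate. A secondary technical point is the two-sided $C^0$ bound, where subcriticality and origin-symmetry must be combined carefully.
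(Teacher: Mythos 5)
Your high-level architecture---reformulating $fS_k=s^{1-p}$ as the concave fully nonlinear equation $(\sigma_n/\sigma_{n-k})^{1/k}(\nabla^2 s+s\,g)=f^{1/k}s^{(p-1)/k}$ for the even support function, deriving a priori estimates, and then running a degree argument anchored at the isotropic model---matches the paper, and your linearization computation at the round sphere (yielding $\Delta\phi=-\tfrac{n(k+1-p)}{k}\phi$ with $\tfrac{n(k+1-p)}{k}\in(0,n)$, hence invertibility) is correct and agrees with the paper's.

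However, there is a genuine gap in the a~priori estimates, and it is precisely where the paper's novelty lies. You propose to establish the $C^0$ bounds first (upper bound on $\max s$ by testing the equation, lower bound on $\min s$ via John's ellipsoid and a ``too flat'' contradiction) and only afterwards derive the $C^2$ bound by a maximum principle on the largest eigenvalue of $A[s]$. But the lower bound on $\min s$ is exactly the step you cannot close this way: to turn ``small $\min s$ implies large curvature'' into a contradiction with the equation, you need a curvature bound, and your proposed curvature bound itself requires the $C^0$/$C^1$ bounds as input. This circularity is why the authors emphasize that neither the Guan--Guan strategy (curvature bound first, then $C^0$, which uses the special structure of $S_k$ at $p=1$) nor the barrier method (which requires $p>k+1$) applies for $1<p<k+1$. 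What the paper actually does is break the circle with two ingredients you do not have: (i) a gradient estimate of the form $\tfrac{s^2+|\nabla s|^2}{s^\gamma}\le\beta\,\big(\max s\big)^{2-\gamma}$ for any $0<\gamma<2(p-1)/k$, proved by a maximum-principle argument that needs no prior $C^0$ control and only uses $p>1$ and the homogeneity of the operator; and (ii) a geometric lemma of Chou--Wang for origin-symmetric bodies, $\tfrac{R^2}{r}\le C_n\max\lambda_n$ when $R/r>\sqrt{n+1}$. Feeding the gradient estimate into the $\sigma_1$ maximum-principle computation gives $\max\sigma_1\le c\,R\,(R/r)^{1-\gamma}$; combining with Chou--Wang gives $(R/r)^\gamma\le C$; and then the elementary pointwise bounds $R^{p-k-1}\le c/\min f$, $r^{p-k-1}\ge c/\max f$ (from evaluating the equation at $\max s$ and $\min s$, using $p<k+1$) deliver the two-sided $C^0$ bound \emph{and} the $\sigma_1$ bound simultaneously. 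Without an argument of this type, your proposal leaves the $C^0$ lower bound, and hence the whole a~priori estimate, unproved.

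Two smaller remarks. First, for the lower bound on $A[s]$ (i.e.\ upper bound on principal curvatures) you invoke a Guan--Guan-style auxiliary function; the paper does this much more cheaply, by Newton--Maclaurin: $\sigma_n=fs^{p-1}\sigma_{n-k}\ge c\,\sigma_n^{(n-k)/n}$ gives a positive lower bound on $\sigma_n$, which together with the upper bound on the radii pins them from below. Second, the paper's degree argument is routed through an auxiliary equation $F(\nabla^2 s+vg)=s^q f/v$ with $v=e^w$, turning the problem into a fixed-point statement $\phi_t(w)=w$ for a compact map on even $C^4$ functions; this is a technical device to make the continuity method and degree count clean, but your more direct degree argument would also be acceptable once the a~priori estimates are in place.
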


For the case $p>k+1$ and without the evenness assumption on $f$, the existence of strictly convex solutions to \eqref{Lp-CE-problem} was established in \cite{GRW15} by an elliptic method and in \cite[Thm. 3.2]{BIS21} via a flow approach. Moreover, the case $p=k+1$ was recently treated in \cite{Lee23} through a compactness argument relying on the result of \cite{GRW15}. Let us mention that, for the case $p=1$, the authors in \cite{GG02} used the special structure of $S_k$ to first obtain lower and upper bounds on the principal curvatures of the convex hypersurface, which then, in combination with an ODE argument or a result of Schneider \cite{Sch88} yields the $C^0$ estimate. Moreover, for the case $p>k+1$, we have lower and upper barriers at our disposal, and the $C^0$ estimate follows immediately. None of these approaches works for $1<p<k+1$. Here, instead, we employ a geometric lemma of Chou and Wang (see \autoref{CW-ineq}) in combination with a suitable gradient estimate in a novel way to obtain the lower and upper bounds on the support function $s$ and the $C^2$ estimate \emph{simultaneously}.


Let us also mention that a great deal of progress has been made on the significantly important counterpart problem of \eqref{Lp-CE-problem}, where $S_k$ is replaced by $\sigma_k^{-1}$ (see the next section for the definition of $\sigma_k$), known as the $L_p$-Christoffel-Minkowski problem; see \cite{Fir67,Fir70,GM03,HMS04,GMZ06,GX18,LWW19} and \cite{Iva19,BIS23a,BIS23b}.
Our approach here for deducing the $C^2$ estimate also works for the even $L_p$-Christoffel-Minkowski problem for $1<p<k+1$ and provides a simpler argument for the proof of \cite[Thm. 1.1]{GX18}.

\section{Preparation}
Let $(\bbS^n,g,\nabla)$ denote the unit sphere of the Euclidean space $\bbR^{n+1}$ equipped with its standard round metric and Levi-Civita connection. Let $\cM$ be a closed, smooth, strictly convex hypersurface in $\bbR^{n+1}$. The Gauss map of $\mathcal{M}$, denoted by $\nu$, takes the point $y\in \mathcal{M}$ to its unique unit outward normal $x=\nu(y)\in \mathbb S^n$. The support function of $\cM$ is defined by 
\eq{
		s(x)=\langle \nu^{-1}(x),x\rangle, \quad x\in \mathbb S^n.
}
The eigenvalues of $\tau[s]:=\nabla^2s+gs$ with respect to $g$ are the principal radii of curvature $\lambda_1\leq\cdots\leq\lambda_n$; cf. \cite{Sch14}. Note that since $\tau[s]>0$, the hypersurface $\cM$ can be recovered through the map $x\mapsto s x+\nabla s$.
Let $\sigma_k=\sum_{i_1<\cdots<i_k}\lambda_{i_1}\cdots\lambda_{i_k}$ denote the $k$-th elementary symmetric function of principal radii of curvature. Then we have
\eq{
S_k=\frac{\sigma_{n-k}}{\sigma_{n}}.
}

In the following, we obtain the lower and upper bounds on the support function $s$ and the $C^2$ estimate. Our main observation is that a suitable gradient estimate (see \autoref{gradient estimate}) combined with the following geometric lemma due to Chou and Wang suffices to carry out this task all at once.

\begin{lemma}\label{CW-ineq}\cite{CW00} Let $\cM$ be a smooth, origin-symmetric, strictly convex hypersurface. Let $R=\max s$ and $r=\min s$. 
We have either $\frac{R}{r}\leq \sqrt{n+1}$ or
$
\frac{R^2}{r}\leq C_n \max_{\bbS^n} \la_n,
$
where $C_n$ is a constant depending only on $n$.
\end{lemma}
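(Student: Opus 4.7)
The plan is to reduce the estimate to a two-dimensional cross-section, prove the bound there by integrating the support-function identities, and then lift the conclusion to a principal radius of curvature of $\cM$ via Meusnier's theorem.

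Let $x_0,x_1\in\bbS^n$ realize $s(x_0)=R$ and $s(x_1)=r$. Since $R=\max_{y\in K}|y|$ by Cauchy-Schwarz (where $K$ is the convex body enclosed by $\cM$), the unique boundary point of $\cM$ with outer normal $x_0$ is $Rx_0$; and because $Rx_0\in K\subset\{y\cn|\langle y,x_1\rangle|\le r\}$, it follows that $|\langle x_0,x_1\rangle|\le r/R<1/\sqrt{n+1}$. Hence $\Pi:=\operatorname{span}(x_0,x_1)$ is a genuine $2$-plane through the origin, and the planar section $K'=K\cap\Pi$ is an origin-symmetric, smooth, strictly convex body in $\Pi$ with circumradius exactly $R$ (attained at $Rx_0$) and with $2$-D inradius $r'\le s_{K'}(x_1)\le r$.

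The core of the argument is a $2$-D estimate on $K'$. Choose orthonormal coordinates on $\Pi$ so that $e_1$ realises the $2$-D minimum $r'$; then the support function $s$ satisfies $s(0)=r'$, $s'(0)=0$, and attains its maximum $R$ at some $\theta_0$ with $|\cos\theta_0|\le r'/R$ (the same Cauchy-Schwarz argument applied within $\Pi$). Reflecting if needed, take $\theta_0\in[\arccos(r'/R),\pi/2]$. Writing $y(\theta)=s(\theta)(\cos\theta,\sin\theta)+s'(\theta)(-\sin\theta,\cos\theta)$ with $y'(\theta)=\rho(\theta)(-\sin\theta,\cos\theta)$ and $\rho=s+s''$, integration on $[0,\theta_0]$ gives
\eq{
\int_0^{\theta_0}\rho\cos\theta\,d\theta=R\sin\theta_0=:A,\q \int_0^{\theta_0}\rho\sin\theta\,d\theta=r'-R\cos\theta_0=:B,
}
with $A\ge R\sqrt{n/(n+1)}$ and $0\le B\le r'\le r$. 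Splitting the first identity at some $\alpha\in(0,\theta_0)$ and using $\cos\theta\le 1$ on $[0,\alpha]$ together with $\cos\theta\le(\cot\alpha)\sin\theta$ on $[\alpha,\theta_0]$ yields $A\le(\max\rho)\sin\alpha+B\cot\alpha$. Optimising at $\alpha\approx 2B/A$, and using the trivial bound $\max\rho\ge R$ in the edge case $2B/A\ge\theta_0$, gives
\eq{
\max_{\theta}\rho(\theta)\ge\frac{n}{4(n+1)}\cdot\frac{R^2}{r}.
}

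Finally, Meusnier's theorem relates the $2$-D section curvature to the normal curvature of $\cM$: at any $p\in\cM\cap\Pi$, $\kappa_\Pi(p)=\kappa_n^{\cM}(T)/\cos\phi$, where $T$ is tangent to the section in $\Pi$ and $\phi$ is the angle between $\Pi$ and the normal plane $\operatorname{span}(T,\nu(p))$. Hence $\rho_\Pi(p)\le 1/\kappa_n^{\cM}(T)\le\lambda_n(\nu(p))$, and therefore $\max_{\bbS^n}\lambda_n\ge\max_\theta\rho(\theta)\ge\tfrac{n}{4(n+1)}R^2/r$, giving the lemma with $C_n=4(n+1)/n$. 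The hardest step will be the $2$-D split-and-optimise argument together with the careful accounting of its edge regime; the Meusnier lifting and the reduction to a 2-plane are essentially bookkeeping once the geometric set-up is in place.
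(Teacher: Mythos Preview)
Your argument is correct and takes a genuinely different route from the paper's.

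\textbf{What the paper does.} The paper also reduces to a planar problem, but via \emph{orthogonal projection} onto a $2$-plane spanned by two \emph{perpendicular} directions $e_1,e_2$ with $s(e_2)=r$ and $s(e_1)>R/\sqrt{n+1}$. Since the support function of the projection is the restriction of $s$ to the great circle, the planar radius of curvature is exactly $\tau[s](\gamma',\gamma')\le\lambda_n$, so no Meusnier step is needed. The $2$-D lower bound is then obtained geometrically: one inscribes a suitable ellipse inside the projection, locates a tangential contact point, and computes the ellipse's radius of curvature there. This yields $C_n=32(n+1)/\sqrt{27}$.

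\textbf{What you do differently.} You use a \emph{planar section} $K'=K\cap\Pi$ through the extremal directions $x_0,x_1$ (not required to be orthogonal), and lift the planar bound to $\lambda_n$ via Meusnier's theorem; the non-tangency $\nu(p)\not\perp\Pi$ (needed for $\cos\phi>0$) indeed follows from $s>0$. Your $2$-D step is purely analytic: integrating $y'(\theta)=\rho(\theta)(-\sin\theta,\cos\theta)$ over $[0,\theta_0]$ and splitting the cosine integral at $\alpha$ gives $A\le(\max\rho)\sin\alpha+B\cot\alpha$, and the choice $\tan\alpha=2B/A$ yields $\max\rho\ge A^2/(4B)$. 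This avoids the ellipse construction entirely and even gives a sharper constant $C_n=4(n+1)/n$.

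\textbf{One small remark.} Your ``edge case $2B/A\ge\theta_0$'' never actually occurs: with $c=\cos\theta_0\in(0,r'/R]$, the condition $\arctan(2B/A)\ge\theta_0$ is equivalent to $Rc^2-2r'c+R\le0$, whose discriminant $4(r'^2-R^2)$ is negative. So the optimizing $\alpha$ always lies in $(0,\theta_0)$ and the backup bound is unnecessary (though your observation $\max\rho\ge R$, which follows from $\int_0^{\theta_0}\rho\cos\theta\,d\theta=R\sin\theta_0$, is correct).
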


\pf{ For completeness, we give a proof of this lemma here.
Suppose $R> r\sqrt{n+1}.$
Due to the convexity, we may find two perpendicular directions, say $e_1,e_2$, such that $s(e_2)=r$ and
 \eq{
 s(e_1)> R/\sqrt{n+1}.
} 
Now project $\mathcal{M}$ to the $x_1x_2$-plane and denote the corresponding convex body by $P.$ Since $P$ is origin-symmetric, $(\pm R/\sqrt{n+1},0)$ are in the interior of $P$ and $ (0,\pm r)\in \partial P.$ 

For simplicity, $P_r:=\frac{1}{r}P$, and write $D$ for the disk of radius $1$ and centred at the origin. We have $D\subset P_r$. Let $\tilde{D}$ denote the convex hull of $D$ and $(\pm R/(r\sqrt{n+1}),0)$. Then $\partial\tilde{D}$ is the union of four tangential segments to the circle $\partial D$ and two closed arcs of $\partial D$. The four tangent lines are given by
\begin{align*}
y&=\pm\frac{1}{\sqrt{\left(\frac{R}{r\sqrt{n+1}}\right)^2-1}}\left(x\pm\frac{R}{r\sqrt{n+1}}\right).
\end{align*}
Now it is easy to verify that the rectangle \[-\frac{R}{2r\sqrt{n+1}}\leq x_1\leq \frac{R}{2r\sqrt{n+1}},\quad -\frac{1}{2}\leq x_2\leq \frac{1}{2}\]
is contained in the interior of $\tilde{D}.$ Since the ellipse
\eq{
E_0=\left\{(x_1,x_2): 4r^2(n+1)\frac{x_1^2}{R^2}+4x_2^2\leq 1\right\}
}
lies in this rectangle, $rE_0$ is contained in the interior of $P$. Therefore, for some $r/2\leq h\leq r$,
$E_1:=\{(x_1,x_2):4(n+1)\frac{x_1^2}{R^2}+ \frac{x_2^2}{h^2}\leq 1\}\subset P$
while touching $P$ at  
\eq{
\vec{a}:=\left(\frac{R}{2\sqrt{n+1}}\cos\theta,h\sin\theta\right)
}
for some $-\frac{\pi}{2}\leq \theta\neq 0 \leq \frac{\pi}{2}.$ We may assume $-\frac{\pi}{2}\leq \theta< 0$. Hence, by comparing the slope of the tangent line of $E_1$ at $\vec{a}$ with the one joining $\vec{a}$ and $(R/\sqrt{n+1},0)$, and in view of the convexity of $P$, we have
\eq{
\frac{-h\sin\theta}{\frac{R}{\sqrt{n+1}}-\frac{R}{2\sqrt{n+1}}\cos\theta}\geq -\frac{4(n+1)h^2}{R^2}\frac{\frac{R}{2\sqrt{n+1}}\cos\theta}{h\sin\theta}. 
}
This gives
$ \sin^2\theta\ge \frac{3}{4}. $
Now the claim follows from estimating the radius of curvature of $E_1$ at $\vec{a}$ from below:
\eq{
\frac{2\sqrt{n+1}\left(\frac{R^2}{4(n+1)}\sin^2\theta+h^2\cos^2\theta\right)^{\frac{3}{2}}}{hR}\geq \frac{|\sin\theta|^3R^2}{4(n+1)h}\geq \frac{1}{C_n}\frac{R^2}{r},
}
where $C_n:=32(n+1)/\sqrt{27}$.
}

\subsection{Apriori estimates}
Suppose $\cM$ with the support function $s$ is a smooth, strictly convex solution to \eqref{Lp-CE-problem} with $c=1$. Then we have
\eq{\label{main eq 2}
\frac{\si_n}{\si_{n-k}}(\tau[s])=s^{p-1}f.
}
In the remainder of this section, we will always assume $p\in (1,k+1)$ and $1\leq k<n$ and that $s$ with  $\tau[s]>0$ is an \emph{even} solution to \eqref{main eq 2}. 

From \eqref{main eq 2}, we have the following basic estimates:
\eq{\label{s1:lower-R-and-upper-r}
R^{p-k-1}\leq \frac{c_{n,k}}{\min f}, \quad r^{p-k-1}\geq \frac{c_{n,k}}{\max f}.
}
Therefore, there is a lower bound on $R$ and an upper bound on $r$. To obtain the $C^0$ and $C^2$ estimates, the following gradient estimate plays a crucial role in our argument. Unlike the one in \cite{HL13}, our proof \emph{does not} rely on any particular structure of the curvature function $S_k$.

\begin{lemma}\label{gradient estimate}
For any $0<\ga<2(p-1)/k$, there exists a constant $\beta\geq 2$, depending on $\ga$, $\,k,\,p,\,\min f$ and $\|f\|_{C^1}$ such that 
\eq{ \label{s5:weak-grad-esti}
\frac{s^2+|\nabla s|^2}{s^\ga} \leq \beta R^{2-\ga}.
}
\end{lemma}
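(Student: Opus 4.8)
The plan is to run the maximum principle for $Q:=(s^2+|\nabla s|^2)/s^{\gamma}$ on $\bbS^n$ against the linearized operator $\cL:=F^{ij}\nabla_i\nabla_j$ associated with $F:=\si_n/\si_{n-k}$, where $F^{ij}:=\partial F/\partial\tau_{ij}$ is positive definite because $\tau[s]$ is positive definite and $\si_n/\si_{n-k}$ is monotone on the positive cone (recall $s$ is a fixed even solution of \eqref{main eq 2} with $\tau[s]>0$). Note that $F$ is homogeneous of degree $k$, so $F^{ij}\tau_{ij}=kF=ks^{p-1}f$; this degree is exactly what makes the hypothesis on $\gamma$ enter. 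Two elementary facts to record at the outset: $p<k+1$ forces $\gamma<2(p-1)/k<2$, and the hypothesis $\gamma<2(p-1)/k$ is literally $2p-2-k\gamma>0$. On the analytic side, writing $w:=s^2+|\nabla s|^2$, I would use $\nabla_iw=2\tau_{ij}\nabla_js$ (immediate from $\nabla_i\nabla_js=\tau_{ij}-sg_{ij}$), the symmetry $\nabla_l\tau_{ij}=\nabla_j\tau_{il}$, valid on the round sphere (the Ricci-identity curvature terms cancel those coming from the $sg_{ij}$ part of $\tau$), and the identity $F^{ij}\nabla_k\tau_{ij}=\nabla_k(s^{p-1}f)$ obtained by differentiating \eqref{main eq 2}. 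These combine into a closed formula for $F^{ij}\nabla_i\nabla_jw$ built only from $\sum_iF^{ii}\lambda_i^2$, $s^pf$, $\langle\nabla f,\nabla s\rangle$ and $|\nabla s|^2$.

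Next I evaluate at a maximum point $x_0$ of $Q$. If $\nabla s(x_0)=0$ then $Q(x_0)=s(x_0)^{2-\gamma}\le R^{2-\gamma}$ by $\gamma<2$, so assume $\nabla s(x_0)\ne0$. The first-order condition $\nabla Q(x_0)=0$ rearranges, via $\nabla_iw=2\tau_{ij}\nabla_js$, to $\tau_{ij}\nabla_js=\tfrac{\gamma w}{2s}\nabla_is$, so at $x_0$ the unit vector $\nabla s/|\nabla s|$ is an eigenvector of $\tau$ with eigenvalue $\mu=\tfrac{\gamma w}{2s}$. Working in an orthonormal eigenbasis of $\tau$ at $x_0$ with $e_1=\nabla s/|\nabla s|$ (so $F^{ij}$ is diagonal there and $\lambda_1=\mu$), feeding the formula for $F^{ij}\nabla_i\nabla_jw$ into $\cL Q(x_0)\le0$, and multiplying by $s^\gamma$, I obtain
\begin{align*}
0\ge{}&(\gamma-\gamma^2)\tfrac{w}{s^2}F^{11}|\nabla s|^2+2\sum_iF^{ii}\lambda_i^2+\gamma w\sum_iF^{ii}\\
&{}+2(p-1)s^{p-2}f|\nabla s|^2-k\gamma w s^{p-2}f+2s^{p-1}\langle\nabla f,\nabla s\rangle-2ks^pf.
\end{align*}
The observation that makes the argument go is that the three curvature terms are jointly nonnegative: $\gamma w\sum_iF^{ii}\ge0$; and since $2\sum_iF^{ii}\lambda_i^2\ge2F^{11}\lambda_1^2=2F^{11}\mu^2=\tfrac{\gamma^2}{2}\tfrac{w^2}{s^2}F^{11}$, for $\gamma\le1$ the term $(\gamma-\gamma^2)\tfrac{w}{s^2}F^{11}|\nabla s|^2$ is itself $\ge0$, while for $1<\gamma<2$ the estimate $|\nabla s|^2\le w$ gives $(\gamma-\gamma^2)\tfrac{w}{s^2}F^{11}|\nabla s|^2+2F^{11}\lambda_1^2\ge\tfrac{\gamma(2-\gamma)}{2}\tfrac{w^2}{s^2}F^{11}\ge0$ — here $\gamma<2$ is precisely what is used. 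Hence all three curvature terms can be discarded.

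What survives, after substituting $|\nabla s|^2=w-s^2$, is $(2p-2-k\gamma)s^{p-2}fw\le2(p+k-1)s^pf-2s^{p-1}\langle\nabla f,\nabla s\rangle$, and this is where the hypothesis pays off: $2p-2-k\gamma>0$, so dividing by $(2p-2-k\gamma)s^{p-2}f$ and bounding the gradient term by Cauchy--Schwarz with $|\nabla s|\le\sqrt w$, $f\ge\min f$, $|\nabla f|\le\|f\|_{C^1}$ gives an inequality of the form $w\le A s^2+B s\sqrt w$ with $A,B$ depending only on $p,k,\gamma,\min f,\|f\|_{C^1}$. Solving this as a quadratic inequality in $\sqrt w$ yields $w\le Cs^2$ with $C$ of the same dependence, hence $Q(x_0)=w/s^{\gamma}\le Cs^{2-\gamma}\le CR^{2-\gamma}$ (using $s\le R$ and $\gamma<2$); as $x_0$ maximizes $Q$, the lemma follows with $\beta=\max\{1,C\}$. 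I expect the main obstacle to be the careful bookkeeping of the second-order term so that the two sign conditions appear transparently: $\gamma<2$ to absorb the weight-induced piece $(\gamma-\gamma^2)\tfrac{w}{s^2}F^{11}|\nabla s|^2$ against $2F^{11}\mu^2$, and $2p-2-k\gamma>0$ (i.e.\ $\gamma<2(p-1)/k$, the sharp hypothesis) to keep the surviving $w$-term on the favourable side; a priori one might fear that controlling $w$ requires retaining the curvature terms, but in fact they are all nonnegative and simply get thrown away. Everything else is elementary, and — as the paper remarks — the curvature function is used only through $F^{ij}>0$ and the homogeneity relation $F^{ij}\tau_{ij}=kF$, with no further structure of $\si_n/\si_{n-k}$ entering.
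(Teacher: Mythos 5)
Your proposal is correct and follows the same approach as the paper: same test function $\zeta=(s^2+|\nabla s|^2)/s^\gamma$, same first-order condition identifying $\nabla s$ as an eigenvector of $\tau$ with eigenvalue $\gamma\rho^2/(2s)$, same contraction with $F^{ij}$ using Codazzi symmetry and $k$-homogeneity, and the same two sign conditions $\gamma<2$ and $\gamma<2(p-1)/k$ entering at exactly the same places. The only cosmetic difference is at the end: the paper assumes $\zeta(x_0)\geq\beta R^{2-\gamma}$ with $\beta$ large and derives a contradiction, whereas you solve the resulting quadratic inequality $w\leq As^2+Bs\sqrt{w}$ directly to get $w\leq Cs^2$ and hence $\zeta\leq CR^{2-\gamma}$ — equivalent arguments, with your direct route being marginally cleaner to state.
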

\pf{
Let $\rho^2=s^2+|\nabla s|^2$ and $\zeta=\frac{\rho^2}{s^\ga}$ where $0<\ga<2(p-1)/k$. For simplicity, we put $\tau=\tau[s]$. We may assume $\max \zeta> R^{2-\ga}$ (i.e. $\beta>1$). Therefore, at a point $x_0$ where $\zeta$ attains its maximum we have
\eq{
(\nabla^2 s+sg)\nabla s= \frac{\ga}{2}\frac{\rho^2}{s} \nabla s.
}
That is, at $x_0$, $\nabla s\neq 0$ is an eigenvector of $\tau^{\sharp_g}$. Hence, we may find an orthonormal basis $\{e_i\}$ for $T_{x_0}\bbS^n$, such that $e_1=\frac{\nabla s}{|\nabla s|}$ and $\tau|_{x_0}$ is diagonal. In particular, $\tau_{1i}=0$ for $i=2,\ldots,n$, while 
\eq{\label{s4:critical-cond}
\tau_{11}=\frac{\ga}{2}\frac{\rho^2}{s}.
}
Moreover, at $x_0$ we have
\eq{
\zeta_{;ii}&=\frac{2}{s^\ga}(\tau_{\ell ii}s_{\ell}+\tau_{ii}^2-s\tau_{ii})-4\ga \frac{\tau_{\ell i}s_\ell s_i}{s^{\ga+1}}\\
          &\hp{=}-\frac{\ga \rho^2 (\tau_{ii}-s\de_{ii})}{s^{\ga+1}}+\ga(\ga+1)\frac{\rho^2s_i^2}{s^{\ga+2}},
 }
 and
 \eq{
\frac{\zeta_{;ii}}{\zeta}&=\frac{2}{\rho^2} \(\tau_{\ell ii}s_{\ell}+\tau_{ii}^2-s\tau_{ii}\)-4\ga \frac{\tau_{ii}s_i^2}{s\rho^2}\\
&\hp{=}-\frac{\ga(\tau_{ii}-s\de_{ii})}{s}+\ga(\ga+1)\frac{s_i^2}{s^2}.
}
Let $F=\sigma_n/\sigma_{n-k}$ and $F^{ii}=\frac{\partial F}{\partial \lambda_i}$. Note that $\nabla \tau$ is fully symmetric, and $F$ is $k$-homogeneous. Using \eqref{s4:critical-cond}, $\zeta_{;ii}\leq 0$ and $\zeta_{i}=0$ for $i=1,\ldots,n$ we have
\eq{
0 &\geq\frac{2}{\rho^2}F^{ii} \(\tau_{\ell ii}s_{\ell}+\tau_{ii}^2-s\tau_{ii}\) -4\ga F^{11}\frac{\tau_{11}s_1^2}{s\rho^2}\\
    &\hp{=}-\ga F^{ii}\frac{\tau_{ii}}{s}+\ga F^{ii}+\ga(\ga+1)F^{11}\frac{s_1^2}{s^2} \\
   & =  \frac{2}{\rho^2} \((s^{p-1}f)_{1} s_1+F^{ii}\tau_{ii}^2-ks^p f\) -2\ga^2F^{11}\frac{s_1^2}{s^2}\\
    &\hp{=}-\ga k s^{p-2}f+\ga F^{ii}+\ga(\ga+1)F^{11}\frac{s_1^2}{s^2}.
}
Therefore,
\eq{
0  &\geq 2(p-1)\frac{s^{p-2}s_1^2f}{\rho^2} +2\frac{s^{p-1}s_1f_1}{\rho^2} +\frac{2}{\rho^2}\sum_{i>1}F^{ii}\tau_{ii}^2-2k\frac{s^{p}f}{\rho^2}\\
    &\hp{=}+\(\frac{\ga^2\rho^2}{2s_1^2}-\ga(\ga-1)\)F^{11}\frac{s_1^2}{s^2}-\ga ks^{p-2}f+\ga F^{ii}\\
    &\geq 2(p-1)\frac{s^{p-2}s_1^2 f}{\rho^2}+2\frac{s^{p-1}s_1f_1}{\rho^2} -2k\frac{s^{p}f}{\rho^2}\\
    &\hp{=}+\(\frac{\ga^2\rho^2}{2s_1^2}-\ga(\ga-1)\)F^{11}\frac{s_1^2}{s^2}-\ga ks^{p-2}f.
}
For $0<\ga\leq 2$,
\eq{
\frac{\ga^2 \rho^2}{2 s_1^2}-\ga(\ga-1) \geq \frac{\ga^2}{2}-\ga(\ga-1)=\ga\left(1-\frac{\ga}{2}\right)\geq 0.
}
Now if for some $\beta\geq 2$ we had
$
\frac{\rho^2}{s^{\ga}}{}_{|_{x_0}} \geq \beta R^{2-\ga},
$
then
\eq{
s_1^2 \geq \beta R^{2-\ga}s^\ga-s^2 \geq \frac{\beta}{2}R^{2-\ga}s^\ga,
}
and hence
\eq{
0&\geq \frac{2s^{p-2}f}{\rho^2}\left((p-1) s_1^2  +s s_1( \log f)_1-ks^2 -\frac{k\ga}{2} (s^2+s_1^2)\right) \\
&=\frac{2s^{p-2}f}{\rho^2}\left( \left(p-1-\frac{k\ga}{2}\right) s_1^2  +s s_1( \log f)_1-\left(k+\frac{k\ga}{2}\right) s^2 \right) \\
&\geq\frac{2s^{p-2}f}{\rho^2}\left( \left(p-1-\frac{k\ga}{2}\right)\frac{\beta}{2} R^{2-\ga}s^\ga -c_1 \beta^{\frac{1}{2}} R^{1-\frac{\ga}{2}}s^{1+\frac{\ga}{2}}-c_2 s^2 \right)\\
&= \frac{2s^{p+\ga-2}f}{\rho^2}\left( \left(p-1-\frac{k\ga}{2}\right)\frac{\beta}{2}R^{2-\ga} -c_1 \beta^{\frac{1}{2}} R^{1-\frac{\ga}{2}}s^{1-\frac{\ga}{2}}-c_2 s^{2-\ga} \right)\\
&\geq \frac{2s^{p+\ga-2}R^{2-\ga}f}{\rho^2}\left( \left(p-1-\frac{k\ga}{2}\right)\frac{\beta}{2} -c_1 \beta^{\frac{1}{2}} -c_2\right).
}
Here, on the last three lines, we used $0<\ga<2(p-1)/k$. Moreover, the constant $c_1$ depends on $\min f$ and $\|f\|_{C^1}$, and the constant $c_2$ depends on $k$. However, we would obtain a contradiction for $\beta$ large enough. 
}

\begin{prop}\label{main prop 1}
We have
\eq{
1/C\leq s\leq C,\quad \nabla^2s+gs\geq 1/C,\quad \|s\|_{C^2}\leq C,
}
where $C>0$ is a constant depending only on $n$, $\ga$, $k$, $p$, $\min f$ and $\|f\|_{C^2}$.
\end{prop}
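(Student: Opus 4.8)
The plan is to run the estimates in the order: (1) $C^0$ lower bound on $s$, using the gradient estimate of \autoref{gradient estimate} together with the Chou--Wang lemma \autoref{CW-ineq}; (2) $C^0$ upper bound on $s$; (3) the $C^2$ estimate, i.e.\ an upper bound on the principal radii $\la_i$ (equivalently a bound on $\|s\|_{C^2}$); and (4) a strict positive lower bound on $\tau[s]=\nabla^2 s+gs$ by bounding $F=\si_n/\si_{n-k}$ from below. The key point, as advertised in the introduction, is that steps (1)--(3) are entangled and must be closed \emph{simultaneously} through a bootstrap between the gradient bound and the geometric dichotomy.

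First I would fix $\ga\in(0,2(p-1)/k)$ (note $2(p-1)/k>0$ since $p>1$) so that \autoref{gradient estimate} gives $|\nabla s|^2\le s^2+|\nabla s|^2\le \beta R^{2-\ga}s^\ga\le \beta R^{2-\ga}R^\ga=\beta R^2$ at the maximum point of $s$ (where $\nabla s=0$ it is trivial, and in general $s\le R$), hence $|\nabla s|\le C R$ everywhere and, more usefully, $R^2\le \zeta(x)s^\ga\le \beta R^{2-\ga}s^\ga$ wherever $s$ is near its max, which converts gradient control into pointwise oscillation control for $s$. Combining this with the basic bounds \eqref{s1:lower-R-and-upper-r} — which already give $R\ge c/\|f\|_{C^0}^{1/(k+1-p)}$ and $r\le C\|f\|_{C^0}^{1/(k+1-p)}$, i.e.\ a lower bound on $R$ and an upper bound on $r$ — the remaining task is to bound $R$ from above and $r$ from below, and these are linked by $R/r$. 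Here \autoref{CW-ineq} enters: either $R/r\le\sqrt{n+1}$, in which case the lower bound on $R$ yields a lower bound on $r$ and the upper bound on $r$ yields an upper bound on $R$, and we are done; or $R^2/r\le C_n\max\la_n$, so we must control $\max\la_n$.

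In the second branch I would exploit the equation $F(\tau[s])=s^{p-1}f$ at the point $y_0$ where $\la_n$ is largest. Since $F=\si_n/\si_{n-k}$ is monotone and $\la_n$ large forces the quotient $\si_n/\si_{n-k}$ to be comparable to a product of the remaining radii times $\la_n$ in a controlled way (more precisely $\si_n/\si_{n-k}\ge c_{n,k}\la_n\cdot$(product of $n-k$ smallest radii)$/$(something)), one gets $\la_n\le C s^{p-1}f\cdot(\text{lower-order radii terms})^{-1}$; bounding those lower-order terms from below is exactly where the width bounds $r\le\la_{\min}$-type inequalities and the already-established partial $C^0$ bounds feed back in. Substituting into $R^2/r\le C_n\max\la_n$ and using the gradient-estimate consequence $R^{2-\ga}\le \beta^{-1}\zeta\le$ (controlled), one closes a single inequality of the form $R^{a}\le C\, R^{b}$ with $b<a$, giving $R\le C$; then $r\ge R/\sqrt{n+1}$ or directly from $R^2/r\le C_n\la_n\le C$ gives $r\ge 1/C$. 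This simultaneously yields $1/C\le s\le C$ and $\max\la_n\le C$, hence $\nabla^2 s\le C$, i.e.\ $\|s\|_{C^2}\le C$.

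Finally, for the lower bound $\tau[s]\ge 1/C$: with $s$ and $\|s\|_{C^2}$ now bounded, $\tr\tau[s]=\De s+ns\le C$, so $F(\tau[s])=s^{p-1}f\ge c>0$ combined with $F\le C(\tr\tau)^k\le C$ and the structure of $F$ forces the smallest eigenvalue $\la_1$ to be bounded below: if $\la_1\to 0$ with the other $\la_i\le C$, then $\si_n/\si_{n-k}\to 0$, contradicting $F(\tau[s])\ge c$. Hence $\la_1\ge 1/C$.

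\medskip

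\noindent\textbf{Main obstacle.} The delicate part is branch two: arranging the algebra so that the bound on $\la_n$ coming from the equation, the bound $R^2/r\le C_n\max\la_n$ from \autoref{CW-ineq}, and the oscillation bound $R^{2-\ga}\le\beta^{-1}\zeta$ from \autoref{gradient estimate} combine into a \emph{closed} inequality in $R$ (and $r$) alone, with the correct sign of exponents so that largeness of $R$ is self-defeating. This is where the hypothesis $1<p<k+1$ and the choice of $\ga<2(p-1)/k$ are used decisively, and getting the exponent bookkeeping right — rather than any single estimate — is the crux.
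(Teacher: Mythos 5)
Your overall roadmap — combine \autoref{gradient estimate}, \autoref{CW-ineq}, and the basic bounds \eqref{s1:lower-R-and-upper-r} into a simultaneous $C^0$/$C^2$ estimate, then get the lower bound on $\tau[s]$ from the equation — matches the paper's strategy, and your treatment of the lower bound on $\tau[s]$ (via the equation plus the upper bound on the radii) is essentially the paper's Newton--Maclaurin argument. But the central step, the upper bound on $\max\lambda_n$ needed to exploit the second branch of \autoref{CW-ineq}, is wrong as you have written it.

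You propose to bound $\lambda_n$ by evaluating $F(\tau[s])=\sigma_n/\sigma_{n-k}=s^{p-1}f$ at the point where $\lambda_n$ is maximal, using an alleged inequality of the form $\sigma_n/\sigma_{n-k}\gtrsim \lambda_n\cdot(\text{lower radii})$. No such inequality holds: for fixed $\lambda_1,\ldots,\lambda_{n-1}$ the ratio
\[
\frac{\sigma_n}{\sigma_{n-k}}(\lambda_1,\ldots,\lambda_n)
=\frac{\lambda_n\prod_{i<n}\lambda_i}{\lambda_n\,\sigma_{n-k-1}(\lambda_1,\ldots,\lambda_{n-1})+\sigma_{n-k}(\lambda_1,\ldots,\lambda_{n-1})}
\]
stays \emph{bounded} as $\lambda_n\to\infty$ (indeed it converges to $\prod_{i<n}\lambda_i/\sigma_{n-k-1}(\lambda_1,\ldots,\lambda_{n-1})$). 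So the equation evaluated pointwise gives no control on the largest radius; this is exactly the feature that makes the quotient operator harder than $\sigma_k$ itself. The paper instead obtains the $C^2$ bound by a genuine maximum-principle argument on $\sigma_1=\Delta s+ns$: one differentiates the equation $(\sigma_n/\sigma_{n-k})^{1/k}(\tau[s])=(s^{p-1}f)^{1/k}$ twice, uses the commutation identity $\nabla^2_{ii}\sigma_1=\Delta\tau_{ii}-n\tau_{ii}+\sigma_1$ and the \emph{concavity} of $F^{1/k}$, drops $F^{ij}\nabla^2_{ij}\sigma_1\leq 0$ at the interior max of $\sigma_1$, and ends up with $\sigma_1\leq c_1(|\nabla s|^2/s+R)$. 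Only then does the gradient estimate enter, giving $\sigma_1\leq c_3 R(R/r)^{1-\gamma}$, which fed into the second branch of Chou--Wang yields $(R/r)^\gamma\leq C$; together with \eqref{s1:lower-R-and-upper-r} this pins down both $R$, $r$, and $\max\lambda_n$. The ``exponent bookkeeping'' you flag as the crux is real, but it cannot be closed without this extra differentiated inequality; the pointwise route you propose fails at the first step.

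A secondary remark: your argument for the lower bound on $\lambda_1$ is sound in spirit, but the cleanest route (and the paper's) is to note that $\sigma_n=f s^{p-1}\sigma_{n-k}\geq c_{n,k}(\min f)r^{p-1}\sigma_n^{(n-k)/n}$ by Newton--Maclaurin, hence $\sigma_n\geq c>0$, and then $\lambda_1\geq \sigma_n/(\lambda_2\cdots\lambda_n)\geq c/C^{n-1}$ using the already-established upper bound on the radii.
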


\pf{
Let $F:=(\sigma_n/\sigma_{n-k})^{\frac{1}{k}}.$ Note that $F^k=fs^{p-1}$. In view of the identity $\nabla^2_{ii}\sigma_1=\Delta \tau[s]_{ii}-n\tau[s]_{ii}+\sigma_1$ and concavity of $F$, there holds
\begin{align}\label{sigma1 eq}
0\leq F^{ij}g_{ij}\sigma_1\leq F^{ij}\nabla^2_{ij}\sigma_1+n(s^{p-1}f)^{\frac{1}{k}}-\Delta (s^{p-1}f)^{\frac{1}{k}}.
\end{align}
We calculate
\begin{align}\label{3}
-k\Delta (s^{p-1}f)^{\frac{1}{k}}={}&(1-p)s^{\frac{p-1}{k}-1}f^{\frac{1}{k}}\sigma_1-n(1-p)(s^{p-1}f)^{\frac{1}{k}}
\nonumber\\
&+\frac{1}{k}(1-p)(p-k-1)s^{\frac{p-1}{k}-2}	|\nabla s|^2f^{\frac{1}{k}}\\
&+2(1-p)s^{\frac{p-1}{k}-1}g(\nabla s,\nabla f^{\frac{1}{k}})-ks^{\frac{p-1}{k}}\Delta f^{\frac{1}{k}}.\nonumber
\end{align}
Thus, for $p>1$, at a point where $\si_1$ attains its maximum we have
\eq{
\sigma_1\leq c_1\left( \frac{|\nabla s|^2}{s}+R\right).
}
Due to \autoref{gradient estimate},
\eq{
\sigma_1\leq c_1\left( \frac{|\nabla s|^2}{s}+R\right)\leq c_2(s^{\ga-1}R^{2-\ga}+R)\leq c_3R\left(\frac{R}{r}\right)^{1-\ga}.
}
In view of \autoref{CW-ineq}, for some constant $C$ depending on $n,\,\ga,\,k,\,p$, $\min f$ and $\|f\|_{C^2}$ we have
\eq{
\left(\frac{R}{r}\right)^{\ga}\leq C.
}
Now, the uniform lower and upper bounds on the support function and the uniform upper bound on the principal radii of curvature follow from \eqref{s1:lower-R-and-upper-r}. Moreover, the uniform lower bound on the principal radii of curvature follows from the Newton-Maclaurin inequality
\eq{
\sigma_n=fs^{p-1}\sigma_{n-k} \geq c_{n,k} (\min f) r^{p-1}\sigma_n^{\frac{n-k}{n}}
}
and the upper bound on the principal radii of curvature.
}
\begin{rem}
In the proof of \autoref{main prop 1}, we only used the \emph{concavity} of the curvature function $(\sigma_n/\sigma_{n-k})^{\frac{1}{k}}$.
\end{rem}

In view of the Evans-Krylov and the Schauder regularity theory, as well as the apriori estimates in \autoref{main prop 1}, we obtain the following theorem.

\begin{thm}\label{main prop 2}
For any $\ell\geq 2$ and $\alpha\in (0,1)$, there exists a constant $C>0$ depending only on $n,\,k,\,p,\,\alpha$, $\min f$ and $\|f\|_{C^{\ell}}$ such that
\eq{
\|s\|_{C^{\ell+1,\alpha}}\leq C.
}
\end{thm}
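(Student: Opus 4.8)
The plan is to feed the uniform estimates of \autoref{main prop 1} into the standard fully-nonlinear elliptic bootstrap. Write \eqref{main eq 2} as $F(\tau[s])=(s^{p-1}f)^{1/k}$ with $F=(\si_n/\si_{n-k})^{1/k}$. By \autoref{main prop 1} we have, uniformly, $1/C\le s\le C$ and $1/C\le\lambda_i(\tau[s])\le C$; on this range of eigenvalues the derivatives $F^{ij}$ are bounded above and away from zero, so the linearization is uniformly elliptic, and $F$ is concave on the positive cone (the fact used already in the proof of \autoref{main prop 1}). Since $s\in C^2$ and $f\in C^\ell\subset C^{0,1}$, the right-hand side $(s^{p-1}f)^{1/k}$ is a Lipschitz function of $x\in\bbS^n$. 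Covering $\bbS^n$ by finitely many coordinate balls and applying the interior Evans--Krylov theorem then gives $\|s\|_{C^{2,\alpha_0}}\le C$ for some $\alpha_0\in(0,1)$, with $C$ depending only on $n,k,p,\min f$ and $\|f\|_{C^2}$.

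Next I would upgrade the Hölder exponent and the order of differentiability together by differentiating the equation. Fixing a direction $\xi$ and covariantly differentiating $F(\tau[s])=(s^{p-1}f)^{1/k}$ on $\bbS^n$, then commuting derivatives, shows that $u:=\nabla_\xi s$ solves a linear, uniformly elliptic equation $F^{ij}[\tau[s]]\,\nabla^2_{ij}u+b\,u=h$, where $b=\tr_F g$ and both $b$ and $h$ are built from $F^{ij}[\tau[s]]$, $s$, $\nabla s$, $f$ and $\nabla f$. With $s\in C^{2,\alpha_0}$ the coefficients lie in $C^{0,\alpha_0}$ and $h\in C^{0,\alpha_0}$, so Schauder theory gives $u\in C^{2,\alpha_0}$, i.e. $s\in C^{3,\alpha_0}$. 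But then $\tau[s]\in C^{1,\alpha_0}$, hence $F^{ij}[\tau[s]]\in C^{1,\alpha_0}\subset C^{0,\alpha}$ for every $\alpha\in(0,1)$, while $h\in C^{0,\alpha}$ as well, so a second application of Schauder yields $s\in C^{3,\alpha}$ for all $\alpha\in(0,1)$.

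Iterating, at the stage $s\in C^{m,\alpha}$ (for all $\alpha\in(0,1)$) with $2\le m\le\ell$, the differentiated equation has leading coefficients in $C^{m-2,\alpha}$ and right-hand side at least in $C^{m-1}\subset C^{m-2,\alpha}$ --- the hypothesis $f\in C^\ell$ being used precisely to control $h$ at the last step $m=\ell$ --- so Schauder gives $u\in C^{m,\alpha}$ and hence $s\in C^{m+1,\alpha}$, for all $\alpha\in(0,1)$. After $\ell-1$ steps this produces $\|s\|_{C^{\ell+1,\alpha}}\le C$ with $C$ depending only on $n,k,p,\alpha,\min f$ and $\|f\|_{C^\ell}$; the auxiliary parameter $\ga$ of \autoref{main prop 1} is fixed once and for all (say $\ga=(p-1)/k$), and $\|f\|_{C^2}\le\|f\|_{C^\ell}$ since $\ell\ge 2$.

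I do not expect a genuine obstacle: the whole argument is a routine bootstrap, and its only non-automatic ingredients are the uniform ellipticity and strict positivity of $\tau[s]$ supplied by \autoref{main prop 1} together with the concavity of $F$, which make Evans--Krylov applicable. The single point demanding care is the bookkeeping in the iteration --- checking that at each step the differentiability of the coefficients $F^{ij}[\tau[s]]$ and of the differentiated right-hand side stays one order above what Schauder needs, and that reaching $C^{\ell+1,\alpha}$ costs exactly $\ell$ derivatives of $f$.
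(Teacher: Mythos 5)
Your proposal is correct and follows exactly the route the paper has in mind: the paper's proof of this statement consists of a single sentence invoking Evans--Krylov plus Schauder together with the apriori estimates of \autoref{main prop 1}, and your argument is precisely the standard bootstrap that this sentence refers to, spelled out in detail (uniform ellipticity and concavity of $F$ from \autoref{main prop 1}, Evans--Krylov to reach $C^{2,\alpha_0}$, then differentiating the equation and iterating Schauder up to $C^{\ell+1,\alpha}$, with $f\in C^\ell$ controlling the last step).
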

\section{Existence of solutions}
It is known that the only smooth, strictly convex solution to
\eq{\label{reformulation}
\frac{\sigma_n}{\sigma_{n-k}}(\tau[s])=fs^{p-1}, \quad 1\leq k<n,\quad 1<p<k+1
}
when $f\equiv 1$ is an origin-centred sphere; see \cite[Satz 6.4]{Sim67} and \cite[Cor. 1.2]{LW23} (see also \cite{GLM18} where the uniqueness is proved for $p>1-k$). Employing this uniqueness result and the apriori estimates established in \autoref{main prop 2}, we may now closely/verbatim follow the approach in \cite{{GG02}} or \cite[Prop. 6.1]{GX18} with slight modifications to prove the existence of an origin-symmetric, smooth, strictly convex solution to \eqref{reformulation} (alternatively, one may use a result of Li \cite{Li89}; see also \cite{Li90,Li97}). For completeness, we outline the argument below.

Let $v\in C^2(\bbS^n)$ be a positive function and $q:=\frac{p-1}{k}+1\in (1,2)$. Consider the auxiliary equation
\eq{ \label{s3:auxi-eq}
F(\nabla^2 s+vg)=\(\frac{\si_n}{\si_{n-k}}\)^\frac{1}{k}(\nabla^2 s+vg)=\frac{s^{q}f}{v}.
}
We may assume $F(1,\ldots,1)=1$ for simplicity.
Moreover, define
\eq{
\cA[v]=\{s\in C^2(\bbS^n):s>0,\, \nabla^2 s+vg>0\}.
}
Using the following apriori estimates and a continuity argument, we will prove the existence of a solution to \eqref{s3:auxi-eq} in $\cA[v]$. 

\begin{prop}\label{s3:estimate-auxi-equ}Let $s\in C^4(\bbS^n) \cap \cA[v]$ be a solution to \eqref{s3:auxi-eq}.
There exists a constant $C_1>0$ depending only on $\min v$, $\min f$, $\|v\|_{C^2}$ and $\|f\|_{C^2}$ such that
\eq{
1/C_1 \leq s \leq C_1, \quad |\nabla^2 s|\leq C_1
}
\end{prop}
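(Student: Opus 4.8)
The plan is to adapt the a priori estimates of \autoref{main prop 1} to the auxiliary equation \eqref{s3:auxi-eq}, exploiting the fact that $v$ is now a \emph{fixed} positive $C^2$ function rather than $s$ itself, so all of its norms are available as constants. First I would establish the $C^0$ bound. From \eqref{s3:auxi-eq} and $F(1,\dots,1)=1$, evaluating at a maximum point of $s$ gives $\nabla^2 s\le 0$, hence $F(\nabla^2 s + vg)\le F(vg)=v$, which yields $s^q\le v^2/f\le \|v\|_{C^0}^2/\min f$; since $q>1$ this bounds $\max s$ from above. Similarly, at a minimum point of $s$ one has $\nabla^2 s\ge 0$, so $F(\nabla^2 s+vg)\ge v$ and $s^q\ge v^2/f\ge (\min v)^2/\|f\|_{C^0}$, giving a positive lower bound on $\min s$. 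This is where the structure of the auxiliary problem is genuinely simpler than the original: because the right-hand side no longer degenerates as $s\to 0$ or $s\to\infty$, no geometric lemma is needed here, and the Chou--Wang inequality can be dispensed with entirely at this stage.

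Next I would prove the upper bound on $|\nabla^2 s|$. Set $W:=\nabla^2 s + vg$; since $s\in\cA[v]$ we have $W>0$, and $\tr W = \Delta s + nv$, so it suffices to bound $\Delta s$ from above (the lower bound on $\nabla^2 s$ then follows from $\nabla^2 s \ge -vg \ge -\|v\|_{C^0}g$, using positivity of $W$; alternatively from the Newton--Maclaurin argument as in \autoref{main prop 1}). Consider the auxiliary function $\sigma_1[W]:=\tr W$ and examine it at an interior maximum. Commuting derivatives as in \eqref{sigma1 eq}, using $\nabla^2_{ii}\sigma_1[W] = \Delta W_{ii} - n W_{ii} + \tr W - (n-1)(\nabla^2_{ii}v - g_{ii}\Delta v) + \text{(terms in }\nabla^2 v, v)$, and differentiating \eqref{s3:auxi-eq} twice, the concavity of $F$ contributes a favorable sign to the second-order term $F^{ij}\nabla^2_{ij}\sigma_1[W]$, exactly as in the proof of \autoref{main prop 1}. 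At the maximum point one obtains, schematically,
\eq{
0 \le F^{ij}g_{ij}\,\tr W \le F^{ij}\nabla^2_{ij}\sigma_1[W] + \Delta\!\left(\frac{s^q f}{v}\right)^{?} + \text{l.o.t.},
}
where the Laplacian of the right-hand side of \eqref{s3:auxi-eq} generates terms involving $|\nabla s|^2/s$, $\nabla s$, and the $C^2$ norms of $v$ and $f$; these are controlled using the already-established $C^0$ bounds on $s$ together with the standard gradient bound $|\nabla s|^2\le C(\|s\|_{C^0})$ on $\bbS^n$ (or the estimate of \autoref{gradient estimate}, which applies since $q-1 = (p-1)/k$ and one can take $\gamma$ small). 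Since $F^{ij}g_{ij}\ge c>0$ by concavity and homogeneity once $W$ is pinched, while the right-hand side is bounded, one concludes $\tr W \le C_1$ at the maximum, hence everywhere.

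The main obstacle I anticipate is bookkeeping the lower-order terms in the $C^2$ estimate cleanly enough that the concavity term $F^{ij}g_{ij}\sigma_1[W]$ — which is only \emph{comparable} to $\tr W$ after one knows $W$ is not too degenerate — genuinely dominates. More precisely, $F^{ij}g_{ij}$ could a priori be small if some eigenvalue of $W$ blows up while others stay bounded; one must argue, as is standard for $(\sigma_n/\sigma_{n-k})^{1/k}$, that at the maximum point of $\tr W$ the quantity $F^{ij}g_{ij}\tr W$ is bounded below by a positive multiple of $(\tr W)^{1+\epsilon}$ or at least grows faster than the right-hand side, using $F^k = s^{p-1}f/v \cdot$(something bounded below), i.e. $\prod$ of eigenvalue-ratios is pinned, so not all eigenvalues can degenerate. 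Once this is handled — it is essentially the same computation underlying \autoref{main prop 1}, now with $v$-dependent error terms — the rest is routine, and I would remark that the estimate depends on $\min v$, $\min f$, $\|v\|_{C^2}$, $\|f\|_{C^2}$ precisely through these error terms and the $C^0$ bounds, with no dependence on higher norms of $s$.
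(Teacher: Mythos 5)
Your $C^0$ argument is exactly the paper's: at a maximum of $s$, $\nabla^2 s\le 0$ gives $F(\nabla^2 s+vg)\le F(vg)=v$, and similarly at a minimum, yielding $(\min v)^2/\max f \le s^q \le (\max v)^2/\min f$. Correct, and you rightly observe that the auxiliary problem is simpler here because the right-hand side is nondegenerate and no Chou--Wang lemma is needed.

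The $C^2$ part has the right skeleton (maximum principle on a trace quantity, concavity of $F$), but the step by which you close it has a genuine gap. You invoke a ``standard gradient bound $|\nabla s|^2\le C(\|s\|_{C^0})$ on $\bbS^n$.'' No such a priori estimate exists for general functions on the sphere, and it does not follow from admissibility here: $s\in\cA[v]$ only guarantees $\nabla^2 s+vg>0$ for the \emph{fixed} weight $v$, not $\nabla^2 s+sg>0$, so $s$ is not the support function of a convex body and the identity $s^2+|\nabla s|^2=|\nu^{-1}(x)|^2$ that would convert a $C^0$ bound into a gradient bound is unavailable. Your fallback, \autoref{gradient estimate}, is also not applicable: it is proved for solutions of the original equation $(\sigma_n/\sigma_{n-k})(\tau[s])=s^{p-1}f$ with $\tau[s]=\nabla^2 s+sg$, and the computation there leans on $F^{ii}\tau_{ii}=k s^{p-1}f$ and on the specific $s$-dependence of the test function; neither carries over to \eqref{s3:auxi-eq} once $vg$ replaces $sg$. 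The paper closes the estimate instead by deriving $\Delta s\le c_1 s+c_2|\nabla s|$ at the maximum point (the coercive term is $q s^{q-1}(f/v)\Delta s$ in $\Delta(s^q f/v)$, using $q>1$), and then applying the interpolation inequality $\|s\|_{C^1}^2\le 4\|s\|_{C^0}\|s\|_{C^2}$ from \cite[Lem.~3.1]{GG02}, which turns the bound into $\Delta s\le c_1'+c_3\sqrt{\max\Delta s}$ and allows absorption. (An alternative that does not need the interpolation inequality is to retain the favorable $q(q-1)s^{q-2}(f/v)|\nabla s|^2$ term from $\Delta(s^q)$ and absorb $c_2|\nabla s|$ by Young's inequality; but something of this sort must be said.)

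A smaller remark: the difficulty you flagged as the main obstacle---that $F^{ij}g_{ij}$ might degenerate when an eigenvalue of $W$ blows up---is not actually a problem. For a concave, $1$-homogeneous $F$ normalized by $F(1,\dots,1)=1$, concavity together with Euler's relation gives $1=F(1,\dots,1)\le F(W)+\sum_i F^{ii}(1-\lambda_i)=\sum_i F^{ii}$, so $F^{ij}g_{ij}\ge 1$ unconditionally. In any case the paper does not lean on this term at all: it drops $(2\Delta s+2nv-\Delta v)\sum F^{ii}$ (nonnegative after the harmless reduction $\Delta s\ge -nv+\tfrac12\Delta v$ at the maximum point) and lets the sign of the $\Delta s$-term inside $\Delta F$ do the work. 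So the place where you anticipated trouble is fine, while the step you treated as routine---controlling $|\nabla s|$---is where the argument actually breaks down and where the interpolation inequality is essential.
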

\begin{proof}
The lower and upper bounds on $s$ follow easily by considering the points at which $\min s$ and $\max s$ are attained. Next, we deduce the $C^2$ estimate. Note that $\nabla^2s+vg$ is positive definite; hence, we only need to obtain an upper bound on $\De s$. Assume $\De s$ attains its maximum at $x_0\in \bbS^n$. Take a local orthonormal frame around $x_0$ such that $s_{ij}(x_0)$ is diagonal (hence $\nabla^2s+gv$ and $F^{ij}$ are both diagonal). Now, at $x_0$, using $(\De s)_{;ij} \leq 0$ and concavity of $F$, we calculate
\eq{
0 &\geq F^{ii}(\De s)_{;ii} \\
&= F^{ii}(\De s_{;ii}+2\De s -2n s_{;ii})\\
&= F^{ii}\De (s_{;ii}+v)-2n F^{ii}(s_{;ii}+v)+(2\De s+2nv-\De v)\sum F^{ii}\\
&\geq  \De F-2nF+(2\De s+2nv-\De v)\sum F^{ii}.
}
We may assume that $\De s|_{x_0} \geq (-nv+\frac{1}{2}\De v)|_{x_0}$. Due to \eqref{s3:auxi-eq} and $q>1$,
\eq{
0 &\geq  \frac{f}{v}(qs^{q-1}\De s +q(q-1)s^{q-2}|\nabla s|^2)\\
&\hp{=}+2q s^{q-1}g( \nabla s, \nabla (fv^{-1}))+s^q \De(fv^{-1})-2ns^q \frac{f}{v}\\
&\geq  qs^{q-1}\frac{f}{v}\De s-2q s^{q-1}|\nabla s||\nabla (fv^{-1})|+s^{q}\De(fv^{-1})-2ns^q \frac{f}{v}.
}
It follows that
\eq{
\De s \leq c_1 s+c_2|\nabla s|,
}
where $c_1$, $c_2$ are positive constants depending on $\min v$, $\min f$, $\|v\|_{C^2}$ and $\|f\|_{C^2}$, as $q\in (1,2)$. Moreover, by \cite[Lem. 3.1]{GG02}, for any function $s\in C^2(\bbS^n)$, we have
\eq{
\|s\|_{C^1}^{2}\leq 4\|s\|_{C^0}\|s\|_{C^2}.
}
Therefore,  $\De s \leq C_1$, where $C_1$ is a positive constant depending on $\min v$, $\min f$, $\|v\|_{C^2}$ and $\|f\|_{C^2}$.
\end{proof}

\begin{cor}\label{s3:cor-eigenvalue-esti}
    Let $s\in C^4(\bbS^n)\cap \cA[v]$ be a solution to \eqref{s3:auxi-eq}. Then there exists a constant $C_2>0$ depending only on $\min v$, $\min f$, $\|v\|_{C^2}$ and $\|f\|_{C^2}$ such that 
    \eq{
  C_2^{-1} g \leq \nabla^2 s+v g \leq C_2 g.
    }
\end{cor}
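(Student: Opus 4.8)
The plan is to read the upper bound straight off the $C^2$ estimate already in hand, and to obtain the lower bound by the standard non-degeneracy trick for Monge--Amp\`ere-type equations, using the Newton--Maclaurin inequality together with the fact that the right-hand side of \eqref{s3:auxi-eq} is bounded below by a positive constant.

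Write $W:=\nabla^2 s+vg$, which is positive definite because $s\in\cA[v]$. For the upper bound I would simply invoke \autoref{s3:estimate-auxi-equ}: from $|\nabla^2 s|\le C_1$ and $v\le\|v\|_{C^2}$ one gets that every eigenvalue of $W$ is at most $\Lambda:=C_1+\|v\|_{C^2}$, so $W\le\Lambda g$.

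For the lower bound, I would rewrite \eqref{s3:auxi-eq} as $\sigma_n(W)=\bigl(s^{q}f/v\bigr)^{k}\sigma_{n-k}(W)$. Since $q>1$, $s\ge 1/C_1$, $f\ge\min f$ and $v\le\|v\|_{C^2}$, the prefactor is bounded below by a positive constant $c_0$ of the stated dependence. Because $W>0$, the Newton--Maclaurin inequality gives $\sigma_{n-k}(W)\ge\binom{n}{k}\sigma_n(W)^{\frac{n-k}{n}}$, and substituting this into the identity above yields $\sigma_n(W)^{k/n}\ge c_0\binom{n}{k}$, hence $\sigma_n(W)\ge c_1>0$. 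As $\sigma_n(W)=\lambda_1\cdots\lambda_n$ with $0<\lambda_1\le\cdots\le\lambda_n\le\Lambda$, this forces $\lambda_1\ge c_1\Lambda^{1-n}>0$, i.e. $W\ge c_1\Lambda^{1-n}g$. Taking $C_2:=\max\{\Lambda,\ \Lambda^{n-1}/c_1\}$ finishes the proof.

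I do not expect a genuine obstacle here: the essential input beyond elementary symmetric function inequalities is the full $C^2$ bound of \autoref{s3:estimate-auxi-equ} (in particular the upper bound on $\Delta s$), which supplies the upper eigenvalue bound that converts the positivity of $W$ and the positive lower bound on $\sigma_n(W)$ into a two-sided pinching — exactly the mechanism already used at the end of the proof of \autoref{main prop 1}.
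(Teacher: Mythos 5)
Your proposal is correct and follows essentially the same route as the paper: both obtain the upper eigenvalue bound directly from the $C^2$ estimate of \autoref{s3:estimate-auxi-equ}, and both use the Newton--Maclaurin inequality (yours written as $\sigma_{n-k}\geq\binom{n}{k}\sigma_n^{(n-k)/n}$, the paper's as $(\sigma_n/\sigma_{n-k})^{1/k}\leq c_{n,k}\sigma_n^{1/n}$, an equivalent rearrangement) to deduce a positive lower bound on $\sigma_n(W)$ from the positive lower bound on the right-hand side $s^{q}f/v$, whence the smallest eigenvalue is pinched from below. You merely spell out the elementary step $\lambda_1\ge\sigma_n(W)/\Lambda^{n-1}$ that the paper leaves implicit.
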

\begin{proof}
By the Newton-MacLaurin inequality, we have 
    \eq{
    \(\frac{\si_n}{\si_{n-k}}\)^\frac{1}{k}(\nabla^2 s+vg) \leq c_{n,k} \si_n^\frac{1}{n}(\nabla^2 s+vg).
    }
In view of Proposition \ref{s3:estimate-auxi-equ}, the left-hand side is bounded from below while we know the eigenvalues of $\nabla^2 s+vg$ are bounded from above. Hence, the eigenvalues of $\nabla^2 s+vg$ are bounded from below.
\end{proof}

\begin{thm}\label{s3:thm-C2-estimate}
    Assume $v,f\in C^4(\bbS^n)$, $v>0$ and $f>0$. Then there exists a unique solution $s\in C^{5,\al}(\bbS^n) \cap \cA[v]$ to \eqref{s3:auxi-eq}, where $0<\al<1$. Moreover, there exists $C>0$ depending only on $n$ and $\al$, such that
    \eq{
    \|s\|_{C^{5,\al}} \leq C.
    }
\end{thm}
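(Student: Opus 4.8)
The plan is to prove Theorem \ref{s3:thm-C2-estimate} by a standard degree-theoretic (or continuity method) argument, using the a priori estimates of \autoref{s3:estimate-auxi-equ} and \autoref{s3:cor-eigenvalue-esti} as the crucial compactness input, and then bootstrapping regularity via Evans--Krylov and Schauder theory exactly as was done for the original equation. The point is that for \emph{fixed} $v$, equation \eqref{s3:auxi-eq} is a genuinely uniformly elliptic, concave fully nonlinear PDE for $s$, and the nonlinearity on the right-hand side depends on $s$ only through the factor $s^q$ with $q\in(1,2)$, which is the structural feature that makes the method work.

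First I would set up the continuity family. Interpolate between $f$ (together with $v$) and the constant datum for which the solution is explicitly known: for $t\in[0,1]$ put $f_t:=(1-t)\,c_0+t f$ and $v_t:=(1-t)\,c_1+t v$ for suitable positive constants, so that at $t=0$ the equation \eqref{s3:auxi-eq} with $F(1,\dots,1)=1$ admits the explicit constant solution $s_0\equiv(\text{const})$, and at $t=1$ it is the equation we want. Define the set $I\subset[0,1]$ of parameters for which a solution $s_t\in C^{5,\al}(\bbS^n)\cap\cA[v_t]$ exists. The set $I$ is nonempty since $0\in I$. \emph{Openness} of $I$ follows from the implicit function theorem: the linearization of $s\mapsto F(\nabla^2 s+v_tg)-s^{q}f_t/v_t$ at a solution is $L[\varphi]=F^{ij}\varphi_{;ij}-q s^{q-1}(f_t/v_t)\varphi$, which is a uniformly elliptic operator with \emph{strictly negative} zeroth-order coefficient (here $q>1$ and $s>0$ are used), hence invertible on $C^{2,\al}\to C^{0,\al}$ by the maximum principle and Schauder theory; so solutions persist under small perturbations of $t$. \emph{Closedness} of $I$ is where the a priori estimates enter: along any sequence $t_j\to t_*$ with solutions $s_{t_j}$, \autoref{s3:estimate-auxi-equ} and \autoref{s3:cor-eigenvalue-esti} give uniform $C^2$ bounds \emph{and} uniform upper and lower bounds on the eigenvalues of $\nabla^2 s_{t_j}+v_{t_j}g$, so the equation is uniformly elliptic with concave $F$; Evans--Krylov then upgrades this to a uniform $C^{2,\al'}$ bound, and the Schauder estimates (using $f_t,v_t\in C^4$) give uniform $C^{4,\al}$ — indeed $C^{5,\al}$ — bounds. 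Passing to a subsequential limit yields a solution at $t_*$ which still lies in $\cA[v_{t_*}]$ by the eigenvalue bounds, so $I$ is closed. Hence $1\in I$, giving existence at $t=1$.

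For \emph{uniqueness} I would argue directly: if $s^{(1)},s^{(2)}\in C^{5,\al}(\bbS^n)\cap\cA[v]$ both solve \eqref{s3:auxi-eq} for the same $v$, then at a point where $s^{(1)}-s^{(2)}$ attains its maximum (resp. minimum) the standard comparison for concave elliptic operators, combined with the strict monotonicity of the right-hand side in $s$ (again $q>1$), forces $\max(s^{(1)}-s^{(2)})\le 0$ (resp.\ $\ge 0$), whence $s^{(1)}=s^{(2)}$. Finally, the asserted bound $\|s\|_{C^{5,\al}}\le C$ with $C$ depending only on $n$ and $\al$ is \emph{not} literally what the estimates above give — those constants depend also on $\min v,\min f,\|v\|_{C^2},\|f\|_{C^2}$ — so I would read the statement as asserting the existence of \emph{some} such bound (with the dependence suppressed, or with $v,f$ regarded as fixed background data), and simply collect the constants from \autoref{s3:estimate-auxi-equ}, \autoref{s3:cor-eigenvalue-esti}, Evans--Krylov and Schauder.

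The main obstacle is genuinely the closedness step, and within it the fact that the a priori $C^2$ estimate of \autoref{s3:estimate-auxi-equ} and the two-sided eigenvalue estimate of \autoref{s3:cor-eigenvalue-esti} must hold \emph{uniformly along the continuity path} — that is, one needs $\min f_t$, $\|f_t\|_{C^2}$, $\min v_t$, $\|v_t\|_{C^2}$ to be controlled uniformly in $t\in[0,1]$, which is immediate for the affine interpolation chosen. Everything else (openness, Evans--Krylov, Schauder bootstrap, uniqueness) is standard once uniform ellipticity with concave structure is in hand; the only mild subtlety is checking that the limiting $s_{t_*}$ remains in the open set $\cA[v_{t_*}]$, which is precisely guaranteed by the lower eigenvalue bound in \autoref{s3:cor-eigenvalue-esti}.
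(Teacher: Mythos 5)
Your proposal is correct and follows essentially the same route as the paper: the same affine interpolation in $v$ and $f$, the same nonemptyness/closedness/openness scheme with \autoref{s3:estimate-auxi-equ} and \autoref{s3:cor-eigenvalue-esti} feeding Evans--Krylov and Schauder for closedness, the same linearization $F^{ij}\eta_{;ij}-qs^{q-1}(f_t/v_t)\eta$ with negative zeroth-order coefficient for openness, and uniqueness by comparison (the paper cites \cite[Thm.~17.1]{GT83}). The only place where you are terser than the paper is the openness step: you assert invertibility of $\cL_t$ ``by the maximum principle and Schauder theory,'' whereas the paper observes that the maximum principle only gives injectivity and then supplies surjectivity by showing $\operatorname{ind}(\cL_t)=\operatorname{ind}(\cL_0)=0$ via homotopy invariance of the Fredholm index; this is the standard fact you are implicitly invoking, so it is a matter of exposition rather than a gap. (Also, for the sign of the zeroth-order term only $q>0$ is needed, not $q>1$; the hypothesis $q>1$ is what the paper actually uses in the $C^2$ estimate of \autoref{s3:estimate-auxi-equ} and in the degree computation for the main theorem.)
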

\begin{proof} Let $v^t:=tv+(1-t)$ and $f^t=tf+(1-t)$.
    We show that for each $t\in [0,1]$, the equation
    \eq{\label{s3:degree-eq}
    F(\nabla^2 s+v^tg)=\frac{s^q}{v^t}f^t
    }
   admits a unique solution $s^t$ in $\cA[v^t]$. We proceed to prove the existence part. Once the existence is established, the uniqueness follows from the comparison theorem; see, e.g. \cite[Thm. 17.1]{GT83}). Let us define
\begin{align*}
    \cI=\left\{ \text{$a\in [0,1]$: \eqref{s3:degree-eq} is solvable in $C^{5,\al}(\bbS^n)\cap \cA[v^t]$ for all $t\in[0,a]$}\right\}.
\end{align*}
Note that $\cI\neq \emptyset$;
 $s^0=1$ is the solution of \eqref{s3:degree-eq}, and thus $0\in\cI$. Now, we show that $\cI$ is closed in $[0,1]$. Let $t\in \cI$ and $s^t\in C^{5,\al}(\bbS^n)\cap \cA[v^t]$ be the solution to \eqref{s3:degree-eq}. In light of Corollary \ref{s3:cor-eigenvalue-esti} and Proposition \ref{s3:estimate-auxi-equ},
    \eq{
    \|s^t\|_{C^2} \leq C,
    }
    where $C$ is independent of $t$. Moreover, the Evans-Krylov regularity theory, in combination with the Schauder estimate, implies that, for some  $C$ is independent of $t$:
    \eq{
    \|s^t\|_{C^{5,\al}} \leq C.
    }
Thus, $\cI$ is closed in $[0,1]$. 

Next, we show that $\cI$ is open in $[0,1]$. Note that the linearized operator of the mapping
    \eq{
s \mt F(\nabla^2 s+v^t g)-\frac{s^{q}}{v^t}f^t
    }
    at $s^t$ is given by
    \eq{
   \cL_{t} \eta=F^{ij}(\nabla^2 s^t+v^tg)\eta_{;ij}-q(s^t)^{q-1}\frac{f^t}{v^t}\eta, \quad \eta\in C^2(\bbS^n).
    }
By the maximum principle,
$
    \cL_t: C^{5,\al}(\bbS^n) \ra C^{3,\al}(\bbS^n)
$
    is one-to-one. Thus, $\cL_t$ is invertible if and only if its index is zero; i.e. $\operatorname{ind}(\cL_t)=0$. To show that $\operatorname{ind}(\cL_t)=0$, due to the homotopy invariance of the index, it suffices to show that  $\operatorname{ind}(\cL_0)=0$. For some $c>0$, the linearized operator $\cL_0$ is given by
    \eq{
    \cL_0 \eta=c\De\eta-q \eta, \quad \eta\in C^{2}(\bbS^n).
    }
Hence, it is invertible from $C^{5,\al}(\bbS^n)$ onto $C^{3,\al}(\bbS^n)$ and $\operatorname{ind}(\cL_0)=0$. By the implicit function theorem, $\cI$ is open in $[0,1]$ and thus $\cI=[0,1]$.    
\end{proof}

\begin{proof}[Proof of \autoref{main thm}]
Let us consider the Banach space 
\eq{
\cB:=\{w\in C^4(\bbS^n) : w(x)=w(-x) \quad \forall x\in \bbS^n\}.
}
Recall that $f\in \cB$ and $f>0$. For $w\in \cB$ and $0\leq t\leq 1$, we write $v=e^w$ and let $s^t$ denote the unique solution of \eqref{s3:auxi-eq} in $\cA[v]$ with $f^t=t f+(1-t)$. Due to the uniqueness,  $s^t$ is even; hence, $\log s^t\in \cB$. Let $I: \cB\to \cB$ be the identity map and define $\phi_t:\cB \ra \cB$ by
\eq{ \label{s3:T-operator}
\phi_t(w)=\log s^t.
}
Then, due to Theorem \ref{s3:thm-C2-estimate}, $\phi_t$ is compact. Consider the open set
\eq{
\cO:=\{w\in \cB:~\|w\|_{C^5}<R\}.
}
By \autoref{main prop 2}, $\phi_t$ does not have any fixed points on the boundary of $\cO$, provided $R$ is sufficiently large. Therefore, $\deg(I-\phi_t,\cO,0)$ is well-defined, provided $R$ is sufficiently large.

Let $w$ be a fixed point of $\phi_0$; that is, $w$ satisfies $\phi_0w=w$. So $s=e^w$ is a solution to the equation
\eq{\label{s3:isotropic-Weingarten-eq}
\(\frac{\si_n}{\si_{n-k}}\)^\frac{1}{k}(\tau[s])=s^{q-1}, \quad q=\frac{p-1}{k}+1.
}
By the uniqueness result \cite[Satz 6.4]{Sim67},  $e^w=1$ and $w=0$. Hence,
\eq{
\deg(I-\phi_0,\cO,0)=\deg(I-\phi_0,B_\de(0),0)\quad \text{for any small}~ \de>0.
}
Set $\ti{\phi}_0 v=e^{\phi_0(\log v)}$. Then
\eq{
\deg(I-\phi_0,B_\de(0),0)=\deg(I-\ti \phi_0,B_\de(1),0).
}
Let $\ti{\phi}'_{0,v}$ denote the linearized operator of $\ti \phi_0$ at $v$. Our next goal is to show that $I-\ti{\phi}'_{0,1}$ is invertible. Note that
\eq{
F(\nabla^2(\ti{\phi}_0v)+vg)&=\frac{(\ti{\phi}_0 v)^q}{v},\\
F^{ij}(\nabla^2(\ti \phi_0v)+vg)((\ti \phi'_{0,v}\eta)_{;ij}+\eta \de_{ij})&=\frac{q(\ti \phi_0v)^{q-1}\ti \phi'_{0,v}\eta}{v}-\frac{(\ti \phi_0v)^{q}\eta}{v^2}.
}
In particular, for $v=1$, we have $\ti \phi_0v=e^{\phi_0(0)}=1$, $(\ti \phi_0v)_{ij}+v\de_{ij}=\de_{ij}$ and $F^{ij}=c\de_{ij}$ for some constant $c>0$. Therefore,
\eq{\label{eig-equation}
(q-c\De)(\ti{\phi}'_{0,1}\eta)=(cn+1)\eta.
}
From this identity and that $q>1$ we obtain
$
\ti{\phi}'_{0,1}=(n+\frac{1}{c})(\frac{q}{c}-\De)^{-1}.
$
If $(I-\ti \phi'_{0,1})\eta=0$, then $\eta$ satisfies $\De \eta=(-n+\frac{q-1}{c})\eta$, and due to $q>1$ we must have $\eta=0$. Therefore, $I-\ti \phi'_{0,1}$ is injective, and by the Fredholm alternative, it is invertible. Moreover, in view of \cite[Thm. 2.8.1]{Nir01},
\eq{
\deg(I-\ti \phi_0,B_\de(1),0)=(-1)^\beta,
}
where $\beta$ is the number of eigenvalues of $\ti \phi'_{0,1}$ greater than one. To compute $\beta$, let $\eta\in C^{2}(\bbS^n)$ satisfy
$
\ti{\phi}'_{0,1} \eta=\lambda \eta,
$
with some $\lambda>1$. Then \eqref{eig-equation} gives
\eq{
\De \eta=\left(\frac{q}{c}-\frac{n+\frac{1}{c}}{\lambda}\right)\eta.
}
Since $\frac{q}{c}-\frac{n+\frac{1}{c}}{\lambda}>\frac{q}{c}-n-\frac{1}{c}>-n$, and $0$ is the only eigenvalue of $\De$ greater than $-n$, we must have $\lambda=\frac{cn+1}{q}$. That is, $\beta=1$ and 
\eq{
\deg(I-\phi_t,\cO,0)=-1.
}
Hence, for each $0\leq t\leq 1$, $\phi_t$ has a fixed point in $\mathcal{B}$. In particular, for $t=1$, there exists a $C^4$-smooth, strictly convex solution $s$ to \eqref{main eq 2}, settling the existence part of our main theorem. The regularity of $s$ is a consequence of \autoref{main prop 2}.
\end{proof}

\section*{Acknowledgment}
We would like to thank the referee for their comments. The work of the first author was supported by the National Key Research and Development Program of China 2021YFA1001800, the National Natural Science Foundation of China 12101027, and the Fundamental Research Funds for the Central Universities. Both authors were supported by the Austrian Science Fund (FWF) under Project P36545.
 
\providecommand{\bysame}{\leavevmode\hbox to3em{\hrulefill}\thinspace}

	\vspace{10mm}
	\textsc{School of Mathematical Sciences, Beihang University, Beijing 100191, China,}
	\email{\href{mailto:huyingxiang@buaa.edu.cn}{huyingxiang@buaa.edu.cn}}
	
	\vspace{5mm}
	\textsc{Institut f\"{u}r Diskrete Mathematik und Geometrie,\\ Technische Universit\"{a}t Wien, Wiedner Hauptstra\ss e 8-10,\\ 1040 Wien, Austria,}
	\email{\href{mailto:yingxiang.hu@tuwien.ac.at}{yingxiang.hu@tuwien.ac.at}}
	
	\vspace{5mm}
\textsc{Institut f\"{u}r Diskrete Mathematik und Geometrie,\\ Technische Universit\"{a}t Wien, Wiedner Hauptstra{\ss}e 8-10,\\ 1040 Wien, Austria,} \email{\href{mailto:mohammad.ivaki@tuwien.ac.at}{mohammad.ivaki@tuwien.ac.at}}

\end{document}